\newcommand{\abs}[1]{\left|#1\right|}
\newcommand{\bgset}[1]{\big\{#1\big\}}
\newcommand{\closure}[1]{\overline{#1}}
\newcommand{\dint}{\ds{\int}}
\newcommand{\ds}[1]{\displaystyle #1}
\newcommand{\dualp}[3][]{\left(#2,#3\right)_{#1}}
\newcommand{\hquad}{\hspace{0.08in}}
\newcommand{\incl}{\subset}
\newcommand{\intpart}[1]{\left[#1\right]}
\newcommand{\norm}[2][]{\left\|#2\right\|_{#1}}
\newcommand{\PS}[1]{$(\text{PS})_{#1}$}
\newcommand{\restr}[2]{#1|_{#2}}
\newcommand{\seq}[1]{\left(#1\right)}
\newcommand{\set}[1]{\left\{#1\right\}}
\newcommand{\F}{{\cal F}}
\newcommand{\G}{{\cal G}}
\newcommand{\M}{{\cal M}}
\newcommand{\N}{{\cal N}}
\newcommand{\R}{\mathbb R}
\newcommand{\RP}{\R \text{P}}
\newcommand{\Z}{\mathbb Z}
\def\ocirc#1{\ifmmode\setbox0=\hbox{$#1$}\dimen0=\ht0
\advance\dimen0 by1pt\rlap{\hbox to\wd0{\hss\raise\dimen0
\hbox{\hskip.2em$\scriptscriptstyle\circ$}\hss}}#1\else
{\accent"17 #1}\fi}
\DeclareMathOperator{\dvg}{div}
\newtheorem{lemma}{Lemma}[section]
\newtheorem{theorem}[lemma]{Theorem}
\numberwithin{equation}{section}
\title{\bf Asymptotic behavior of the eigenvalues\\ of the $p(x)$-Laplacian\thanks{{\rm MSC2010:} Primary 34L15, 34L20, Secondary 35J62,	74E10
\newline \smallskip \indent\; {\em Key Words and Phrases:} $p(x)$-Laplacian, eigenvalues, asymptotic behavior.
\newline The second author was supported by the 2009 MIUR project: ``Variational and Topological Methods in the Study of Nonlinear Phenomena''}}
\author{\bf Kanishka Perera\\
Department of Mathematical Sciences\\
Florida Institute of Technology\\
150 W University Blvd, Melbourne, FL 32901, USA\\
\em kperera@fit.edu\\
[\bigskipamount]
\bf Marco Squassina\\
Dipartimento di Informatica\\
Universit\`a degli Studi di Verona\\
C\'a Vignal 2, Strada Le Grazie 15, I-37134 Verona, Italy\\
\em marco.squassina@univr.it}
\date{}
\begin{document}

\maketitle

\begin{abstract}
We obtain asymptotic estimates for the eigenvalues of the $p(x)$-Laplacian defined consistently with a homogeneous notion of first eigenvalue recently introduced in the literature.
\end{abstract}

\section{Introduction}

Let $\Omega$ be a bounded domain in $\R^n,\, n \ge 1$ and let $p \in C(\closure{\Omega},(1,\infty))$. The purpose of this paper is to study the asymptotic behavior of the eigenvalues of the problem
\begin{equation} \label{1.1}
- \dvg \left(\abs{\frac{\nabla u}{K(u)}}^{p(x)-2} \frac{\nabla u}{K(u)}\right) = \lambda\, S(u)\, \abs{\frac{u}{k(u)}}^{p(x)-2} \frac{u}{k(u)}, \quad u \in W^{1,p(x)}_0(\Omega),
\end{equation}
where
\[
K(u) = \norm[p(x)]{\nabla u}, \qquad k(u) = \norm[p(x)]{u}, \qquad S(u) = \frac{\dint_\Omega \abs{\frac{\nabla u(x)}{K(u)}}^{p(x)} dx}{\dint_\Omega \abs{\frac{u(x)}{k(u)}}^{p(x)} dx}.
\]
The equation in \eqref{1.1} was derived by Franzina and Lindqvist in \cite{MR3040343} as the Euler-Lagrange equation
arising from minimizing the Rayleigh quotient $K(u)/k(u)$ over $W^{1,p(x)}_0(\Omega) \setminus \set{0}$.
It was shown there that the first eigenvalue $\lambda_1 > 0$ and has an associated eigenfunction $\varphi_1 > 0$.

We recall that the variable exponent Lebesgue space $L^{p(x)}(\Omega)$ consists of all measurable functions $u$ on $\Omega$ with the Luxemburg norm
\[
\norm[p(x)]{u} := \inf \set{\nu > 0 : \int_\Omega \abs{\frac{u(x)}{\nu}}^{p(x)} \frac{dx}{p(x)} \le 1} < \infty.
\]
The Sobolev space $W^{1,p(x)}(\Omega)$ consists of functions $u \in L^{p(x)}(\Omega)$
with a distributional gradient $\nabla u \in L^{p(x)}(\Omega)$, and the norm in this space is
$\norm[p(x)]{u} + \norm[p(x)]{\nabla u}$. The space $W^{1,p(x)}_0(\Omega)$ is the completion of $C^\infty_0(\Omega)$
with respect to the norm above, and has the equivalent norm $\norm[p(x)]{\nabla u}$. We refer the reader to Diening et al.\! \cite{MR2790542} for details on these spaces.

It was shown in \cite{MR3040343} that
\[
\dualp{K'(u)}{v} = \frac{\dint_\Omega \abs{\dfrac{\nabla u(x)}{K(u)}}^{p(x)-2} \dfrac{\nabla u(x)}{K(u)}\, \cdot \nabla v(x)\, dx}{\dint_\Omega \abs{\frac{\nabla u(x)}{K(u)}}^{p(x)} dx}, \quad u, v \in W^{1,p(x)}_0(\Omega)
\]
and
\[
\dualp{k'(u)}{v} = \frac{\dint_\Omega \abs{\dfrac{u(x)}{k(u)}}^{p(x)-2} \dfrac{u(x)}{k(u)}\ v(x)\, dx}{\dint_\Omega \abs{\frac{u(x)}{k(u)}}^{p(x)} dx}, \quad u, v \in W^{1,p(x)}_0(\Omega),
\]
so the eigenvalues and eigenfunctions of \eqref{1.1} on the manifold
$$
\M = \bgset{u \in W^{1,p(x)}_0(\Omega) : k(u) = 1}
$$
coincide with the critical values and critical points of $\widetilde{K} := \restr{K}{\M}$. In
the next section we will show that $\widetilde{K}$ satisfies the \PS{} condition, so we can
define an increasing and unbounded sequence of eigenvalues of \eqref{1.1} by a minimax scheme.
Although the standard scheme uses Krasnoselskii's genus, we prefer to use a cohomological index
as shown in \cite{MR1998432} by the first author since this gives additional Morse theoretic
information that is often useful in applications.

Let us recall the definition of the $\Z_2$-cohomological index of Fadell and Rabinowitz \cite{MR57:17677}. Let $\F$ denote the class of symmetric subsets of $\M$. For $M \in \F$, let $\overline{M} = M/\Z_2$ be the quotient space of $M$ with each $u$ and $-u$ identified, let $f : \overline{M} \to \RP^\infty$ be the classifying map of $\overline{M}$, and let $f^\ast : H^\ast(\RP^\infty) \to H^\ast(\overline{M})$ be the induced homomorphism of the Alexander-Spanier cohomology rings. Then the cohomological index of $M$ is defined by
\[
i(M) = \begin{cases}
\sup\, \bgset{m \ge 1 : f^\ast(\omega^{m-1}) \ne 0}, & M \ne \emptyset\\[5pt]
0, & M = \emptyset,
\end{cases}
\]
where $\omega \in H^1(\RP^\infty)$ is the generator of the polynomial ring $H^\ast(\RP^\infty) = \Z_2[\omega]$. For example, the classifying map of the unit sphere $S^{m-1}$ in $\R^m,\, m \ge 1$ is the inclusion $\RP^{m-1} \incl \RP^\infty$, which induces isomorphisms on $H^q$ for $q \le m-1$, so $i(S^{m-1}) = m$.

Set
\begin{equation} \label{1.45}
\lambda_j := \inf_{\substack{M \in \F\\[1pt]
i(M) \ge j}}\, \sup_{u \in M}\, \widetilde{K}(u), \quad j \ge 1.
\end{equation}
Then $\seq{\lambda_j}$ is a sequence of eigenvalues of \eqref{1.1} and $\lambda_j \nearrow \infty$. Moreover,
\[
\lambda_j < \lambda \le \lambda_{j+1} \implies i(\widetilde{K}^\lambda) = j,
\]
where $\widetilde{K}^\lambda = \bgset{u \in \M : \widetilde{K}(u) < \lambda}$, so
\begin{equation} \label{1.46}
i(\widetilde{K}^\lambda) = \# \bgset{j : \lambda_j < \lambda} \qquad \forall \lambda \in \R
\end{equation}
(see Propositions 3.52 and 3.53 of Perera et al.\! \cite{MR2640827}). Our main result is the following.

\begin{theorem} \label{mainth}
If $1 < p^- \le p(x) \le p^+ < \infty$ for all $x \in \Omega$ and
\[
\sigma := n \left(\frac{1}{p^-} - \frac{1}{p^+}\right) < 1, \qquad \tau := \left(\frac{1}{p^-} - \frac{1}{p^+}\right) |\Omega| < 1,
\]
then there are constants $C_1, C_2 > 0$ depending only on $n$ and $p^\pm$ such that
\[
C_1\, |\Omega|\, (\lambda/\kappa)^{n/(1 + \sigma)} \le \# \bgset{j : \lambda_j < \lambda} \le C_2\, |\Omega|\, (\kappa\, \lambda)^{n/(1 - \sigma)} \qquad \text{for $\lambda > 0$ large},
\]
where $|\Omega|$ is the Lebesgue measure of $\Omega$ and $\kappa = (1 + \tau)^{1/p^-}/(1 - \tau)^{1/p^+}$.
\end{theorem}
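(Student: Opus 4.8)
The plan is to read the counting function off the $\Z_2$-cohomological index of a sublevel set and then bound that index by comparison with spheres. By \eqref{1.46} one has $\#\set{j : \lambda_j < \lambda} = i(\widetilde{K}^\lambda)$ with $\widetilde{K}^\lambda = \set{u \in \M : \norm[p(x)]{\nabla u} < \lambda}$, since $\widetilde{K}(u) = K(u) = \norm[p(x)]{\nabla u}$ on $\M$. So I would establish the two-sided estimate directly for $i(\widetilde{K}^\lambda)$, using the standard properties of $i$ (monotonicity, invariance under odd homeomorphisms, the normalization $i(S^{m-1}) = m$ recalled above, and the implication that an odd continuous map $A \to B$ forces $i(A) \le i(B)$). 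The engine is a pair of estimates on a cube $Q \incl \Omega$ of side length $\ell \le 1$: for $v \in W^{1,p(x)}(Q)$ with $\int_Q v\, dx = 0$, and for a fixed bump $\phi \in C^\infty_0(Q_1) \setminus \set{0}$ rescaled to $Q$ as $\phi_Q$,
\[
\norm[p(x), Q]{v} \le C\, \ell^{\,1-\sigma}\, \norm[p(x), Q]{\nabla v}, \qquad \norm[p(x), Q]{\nabla \phi_Q} \le C\, \ell^{\,-(1+\sigma)}\, \norm[p(x), Q]{\phi_Q},
\]
with $C = C(n, p^\pm)$. I would obtain the second by dilating $Q$ to the unit cube and estimating the modulars directly, the two powers $\ell^{\,\pm\sigma}$ arising from $c\,\ell^{n/p^- - 1} \le \norm[p(x), Q]{\nabla\phi_Q} \le C\,\ell^{n/p^+ - 1}$ together with $c\,\ell^{n/p^-} \le \norm[p(x), Q]{\phi_Q} \le C\,\ell^{n/p^+}$; and the first by comparing the $L^{p(x)}(Q)$ norms with the constant-exponent norms $\norm[L^{p^+}(Q)]{v}$ and $\norm[L^{p^-}(Q)]{\nabla v}$ (the comparison constants being harmless because $|Q| \le 1$), then using the classical Sobolev--Poincar\'e inequality together with scaling --- and it is exactly here that $\sigma < 1$ is used, since it guarantees the embedding $W^{1,p^-}(Q) \hookrightarrow L^{p^+}(Q)$.

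For the lower bound I would fix $\ell$ of order $(\kappa/\lambda)^{1/(1+\sigma)}$ (hence $\ell \le 1$ once $\lambda$ is large), pack into $\Omega$ a family of $m$ pairwise disjoint congruent cubes $Q_1, \dots, Q_m$ of side $\ell$ with $m$ of order $|\Omega|/\ell^n = |\Omega|\,(\lambda/\kappa)^{n/(1+\sigma)}$, and let $\phi_i$ be the rescaled bump supported in $Q_i$. On $V := \operatorname{span}\set{\phi_1, \dots, \phi_m}$ the functional $k$ is a norm, so $M := \set{u \in V : k(u) = 1}$ is carried onto $S^{m-1}$ by an odd homeomorphism, whence $i(M) = m$. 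For $u = \sum_i t_i\phi_i \in M$, disjointness of the supports makes the modular of $\nabla u$ (resp.\ of $u$) the sum of its cube-by-cube contributions; combining this additivity with the bump estimate and the norm--modular relations over $\Omega$ --- whose correction factors are exactly captured by $\kappa = (1+\tau)^{1/p^-}/(1-\tau)^{1/p^+}$, which is finite because $\tau < 1$ --- gives $\widetilde{K}(u) = \norm[p(x)]{\nabla u} \le C\,\kappa\,\ell^{-(1+\sigma)} < \lambda$. Hence $M \incl \widetilde{K}^\lambda$ and $i(\widetilde{K}^\lambda) \ge i(M) = m$.

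For the upper bound I would fix $\ell$ of order $(\kappa\lambda)^{-1/(1-\sigma)}$, tile $\Omega$ by $N$ cubes $Q_1, \dots, Q_N$ of side $\ell$ with $N$ of order $|\Omega|/\ell^n = |\Omega|\,(\kappa\lambda)^{n/(1-\sigma)}$, and define $\Pi : \widetilde{K}^\lambda \to \R^N$ by $\Pi(u) = \seq{\int_{Q_1} u\, dx, \dots, \int_{Q_N} u\, dx}$; this map is odd and continuous, the averages depending continuously on $u$ through $W^{1,p(x)}_0(\Omega) \hookrightarrow L^1_{\mathrm{loc}}(\Omega)$. If $\Pi(u) = 0$ then $u$ has zero mean on every $Q_i$, so applying the Poincar\'e--Wirtinger estimate on each cube, summing the modulars, and invoking the norm--modular relations over $\Omega$ yields $\norm[p(x)]{\nabla u} \ge c\,\kappa^{-1}\,\ell^{-(1-\sigma)}\,k(u) \ge \lambda$, contradicting $u \in \widetilde{K}^\lambda$. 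Thus $\Pi$ never vanishes on $\widetilde{K}^\lambda$, so radial projection turns it into an odd continuous map $\widetilde{K}^\lambda \to S^{N-1}$, and therefore $i(\widetilde{K}^\lambda) \le i(S^{N-1}) = N$.

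The main obstacle will be the local analysis: proving the two cube estimates with constants depending only on $n$ and $p^\pm$ --- in particular the precise tracking of the powers $\ell^{\mp\sigma}$ produced by the mismatch between how $\norm[p(x)]{\nabla\,\cdot\,}$ and $\norm[p(x)]{\,\cdot\,}$ rescale --- and then keeping the constants under control in the passage from the additive modular identities on the cubes to genuine norm inequalities over $\Omega$, where the ambient measure $|\Omega|$, and hence $\tau$ and $\kappa$, come in. The hypotheses are consumed precisely there: $\sigma < 1$ makes the Sobolev--Poincar\'e step available, and $\tau < 1$ keeps $\kappa$ finite. The remaining ingredients --- packing cubes, the index of a sphere, the mapping property of $i$ --- are routine.
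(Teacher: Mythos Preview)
Your plan is sound and yields the theorem, but it is a genuinely different route from the paper's. The paper does not work with variable-exponent Poincar\'e or bump estimates on cubes at all. Instead it first globally replaces $\norm[p(x)]{\cdot}$ by constant-exponent norms via the comparison $\norm[p^-]{u}/(1+\tau)^{1/p^-}\le\norm[p(x)]{u}\le\norm[p^+]{u}/(1-\tau)^{1/p^+}$, thereby sandwiching $\widetilde{K}$ between the mixed Rayleigh quotients $\widehat{K}(u)=\norm[p^+]{\nabla u}/\norm[p^-]{u}$ and $\widehat{L}(u)=\norm[p^-]{\nabla u}/\norm[p^+]{u}$; it then exploits the exact homothety law $\widehat{K}\mapsto\delta^{-1-\sigma}\widehat{K}$, $\widehat{L}\mapsto\delta^{-1+\sigma}\widehat{L}$ on cubes, together with a subadditivity lemma for cogenus and genus under splitting a domain into two pieces, to build up from a single unit cube. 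For the upper bound the paper also passes through the Neumann functional $\widetilde{L}$ on $W^{1,p(x)}(\Omega)$ and invokes Palais's density theorem to identify $\widetilde{L}^\lambda\cap W^{1,p^+}(\Omega)$ with $\widetilde{L}^\lambda$ up to homotopy.

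Your approach avoids all of that machinery: the lower bound is an explicit sphere of bump functions, the upper bound is the explicit odd map of cube averages, and no auxiliary Neumann problem or Palais theorem is needed. This is arguably more elementary. What the paper's detour buys is the simultaneous estimate for the Neumann eigenvalues $\mu_j$ advertised after the theorem, which does not fall out of your argument. One caution: your phrase ``summing the modulars'' is not quite how the step goes through; the Luxemburg norm is not additive over the cubes, and what actually works is to pass globally to $\norm[p^+]{u}$ and $\norm[p^-]{\nabla u}$ (this is where $\kappa$ enters, exactly as in the paper's comparison lemma), use that the $L^{p^\pm}$ norms \emph{are} $\ell^{p^\pm}$-additive over the cubes, apply the constant-exponent Sobolev--Poincar\'e on each cube, and finish with the elementary inequality $(\sum a_i^{p^+})^{1/p^+}\le(\sum a_i^{p^-})^{1/p^-}$. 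With that adjustment your outline is complete.
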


This result is a contribution towards understanding the spectrum of the $p(x)$-Laplacian, which many researchers have recently found to be somewhat puzzling. For example, it is currently unknown if the first eigenvalue is simple, or if a given positive eigenfunction is automatically a first eigenfunction. Affirmative answers were given to both of these questions for the usual eigenvalue problem for the $p$-Laplacian,
\begin{equation} \label{1.41}
- \dvg \left(|\nabla u|^{p-2}\, \nabla u\right) = \lambda\, |u|^{p-2}\, u, \quad u \in W^{1,p}_0(\Omega),
\end{equation}
where $p > 1$ is a constant, in Lindqvist \cite{MR90h:35088,MR1139483} (see also \cite{MR2462954}). It should be noted that, in the case when $p$ is constant, \eqref{1.1} reduces, not to the problem \eqref{1.41}, which is homogeneous of degree $p-1$, but rather to the nonlocal problem
\[
- \dvg \left(\frac{|\nabla u|^{p-2}\, \nabla u}{\norm[p]{\nabla u}^{p-1}}\right) = \lambda\, \frac{|u|^{p-2}\, u}{\norm[p]{u}^{p-1}}, \quad u \in W^{1,p}_0(\Omega)
\]
that has been normalized to be homogeneous of degree $0$. The estimate
\[
C_1\, |\Omega|\, \lambda^n \le \# \bgset{j : \lambda_j < \lambda} \le C_2\, |\Omega|\, \lambda^n \qquad \text{for $\lambda > 0$ large}
\]
that Theorem \ref{mainth} gives for the eigenvalues of this problem should be compared with the estimate
\[
C_1\, |\Omega|\, \lambda^{n/p} \le \# \bgset{j : \lambda_j < \lambda} \le C_2\, |\Omega|\, \lambda^{n/p} \qquad \text{for $\lambda > 0$ large}
\]
obtained by Friedlander in \cite{MR1017063} for \eqref{1.41} (see also Garc{\'{\i}}a Azorero and Peral Alonso \cite{MR954263}). Caliari and Squassina \cite{MR3122341} have recently developed a numerical method to compute the first eigenpair of the problem \eqref{1.1} and investigate the symmetry breaking phenomena with respect to the constant case.

In the course of proving Theorem \ref{mainth}, we will also establish the same asymptotic estimates for the eigenvalues of the problem
\begin{equation} \label{1.5}
- \dvg \left(\abs{\frac{\nabla u}{L(u)}}^{p(x)-2} \frac{\nabla u}{L(u)}\right) = \mu\, T(u)\, \abs{\frac{u}{l(u)}}^{p(x)-2} \frac{u}{l(u)}, \quad u \in W^{1,p(x)}(\Omega),
\end{equation}
where
\[
L(u) = \norm[p(x)]{\nabla u}, \qquad l(u) = \norm[p(x)]{u}, \qquad T(u) = \frac{\dint_\Omega \abs{\frac{\nabla u(x)}{L(u)}}^{p(x)} dx}{\dint_\Omega \abs{\frac{u(x)}{l(u)}}^{p(x)} dx}
\]
(which coincide with $K$, $k$, and $S$, respectively, on $W^{1,p(x)}_0(\Omega)$). The eigenvalues and eigenfunctions of this problem on
\[
\N = \bgset{u \in W^{1,p(x)}(\Omega) : l(u) = 1}
\]
coincide with the critical values and critical points of $\widetilde{L} := \restr{L}{\N}$. Let $\G$ denote the class of symmetric subsets of $\N$ and set
\[
\mu_j := \inf_{\substack{N \in \G\\[1pt]
i(N) \ge j}}\, \sup_{u \in N}\, \widetilde{L}(u), \quad j \ge 1.
\]
Then $\seq{\mu_j}$ is a sequence of eigenvalues of \eqref{1.5}, $\mu_j \nearrow \infty$, and
\[
i(\widetilde{L}^\mu) = \# \bgset{j : \mu_j < \mu} \qquad \forall \mu \in \R,
\]
where $\widetilde{L}^\mu = \bgset{u \in \N : \widetilde{L}(u) < \mu}$. Since $W^{1,p(x)}(\Omega) \supset W^{1,p(x)}_0(\Omega)$ and $\restr{l}{W^{1,p(x)}_0(\Omega)} = k$, we have $\N \supset \M$, and $\restr{\widetilde{L}}{\M} = \widetilde{K}$, so $\mu_j \le \lambda_j$ for all $j$. We will see that, under the hypotheses of Theorem \ref{mainth},
\[
C_1\, |\Omega|\, (\mu/\kappa)^{n/(1 + \sigma)} \le \# \bgset{j : \mu_j < \mu} \le C_2\, |\Omega|\, (\kappa\, \mu)^{n/(1 - \sigma)} \qquad \text{for $\mu > 0$ large}.
\]

Finally, for the sake of completeness, let us also mention that a different notion of first eigenvalue for the $p(x)$-Laplacian, that does not make use of the Luxemburg norm, has been considered in the past literature, namely,
\[
\lambda^\ast_1 = \inf_{u \in W^{1,p(x)}_0(\Omega) \setminus \set{0}}\, \frac{\dint_\Omega |\nabla u|^{p(x)}\, dx}{\dint_\Omega |u|^{p(x)}\, dx}.
\]
In this framework, $\lambda \in \R$ and $u \in W^{1,p(x)}_0(\Omega) \setminus \set{0}$ are an eigenvalue and an eigenfunction of the $p(x)$-Laplacian, respectively, if
\[
\int_\Omega |\nabla u|^{p(x)-2}\, \nabla u \cdot \nabla v\, dx = \lambda \int_\Omega |u|^{p(x)-2}\, uv\, dx \qquad \forall v \in W^{1,p(x)}_0(\Omega)
\]
(this should be compared with \eqref{1.1}). Let $\Lambda$ denote the set of all eigenvalues of this problem. If the function $p(x)$ is a constant $p > 1$, then it is well-known that this problem admits an increasing sequence of eigenvalues, $\sup \Lambda = + \infty$, and $\inf \Lambda = \lambda_{1,p} > 0$, the first eigenvalue of the $p$-Laplacian (see Lindqvist \cite{MR90h:35088,MR1139483,MR2462954}). For general $p(x)$, $\Lambda$ is a nonempty infinite set, $\sup \Lambda = + \infty$, and $\inf \Lambda = \lambda^\ast_1$ (see Fan et al.\! \cite{MR2107835}). In contrast to the situation when minimizing the Rayleigh quotient with respect to the Luxemburg norm, one often has $\lambda^\ast_1 = 0$, and $\lambda^\ast_1 > 0$ only under special conditions. In \cite{MR2107835}, the authors provide sufficient conditions for $\lambda^\ast_1$ to be zero or positive. In particular, if $p(x)$ has a strict local minimum (or maximum) in $\Omega$, then $\lambda^\ast_1 = 0$. If $n > 1$ and there is a vector $\ell \ne 0$ in $\R^n$ such that for every $x \in \Omega$, the map $t \mapsto p(x + t \ell)$ is monotone on $\bgset{t : x + t \ell \in \Omega}$, then $\lambda^\ast_1 > 0$. Finally, if $n = 1$, then $\lambda^\ast_1 > 0$ if and only if the function $p(x)$ is monotone.

\section{Compactness}

In this section we will show that $\widetilde{K}$ satisfies the \PS{} condition. Here and in the next section we will make use of the well-known Young's inequality
\begin{equation} \label{1.15}
ab \le \Big(1 - \frac{1}{p}\Big) a^{p/(p-1)} + \frac{1}{p}\, b^p \qquad \forall a, b \ge 0,\, p > 1.
\end{equation}

\begin{lemma} \label{Lemma 1.1}
For $u \ne 0$ in $L^{p(x)}(\Omega)$ and all $v \in L^{p(x)}(\Omega)$,
\begin{equation} \label{1.2}
\abs{\dualp{k'(u)}{v}} \le \norm[p(x)]{v}.
\end{equation}
\end{lemma}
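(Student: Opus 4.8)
The plan is to express the duality pairing $\dualp{k'(u)}{v}$ through the formula recalled in the introduction and then estimate it directly using the definition of the Luxemburg norm together with Young's inequality. Writing $\nu = \norm[p(x)]{u}$ and $w = u/\nu = u/k(u)$, the stated formula gives
\[
\dualp{k'(u)}{v} = \frac{\dint_\Omega \abs{w(x)}^{p(x)-2} w(x)\, v(x)\, dx}{\dint_\Omega \abs{w(x)}^{p(x)}\, dx}.
\]
The first thing I would check is that the denominator is well understood: since $\nu$ realizes the infimum in the definition of the Luxemburg norm and the modular $\rho(w) = \int_\Omega \abs{w(x)}^{p(x)}\, dx/p(x)$ is continuous in $\nu$ on $(0,\infty)$ with $\rho(u/\nu)\to 0$ as $\nu\to\infty$, one has $\int_\Omega \abs{w(x)}^{p(x)}\, dx/p(x) = 1$, hence the denominator $\int_\Omega \abs{w(x)}^{p(x)}\, dx$ is a fixed positive number, say $D \in [p^-, p^+]$ when $u\neq 0$. (If one does not want to invoke this normalization, it is enough to keep $D = \int_\Omega \abs{w}^{p(x)}\,dx$ abstractly and note it cancels below.)

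Next I would bound the numerator. Let $m = \norm[p(x)]{v}$; I may assume $m > 0$, the case $v = 0$ being trivial. By definition of the Luxemburg norm, $\int_\Omega \abs{v(x)/m}^{p(x)}\, dx/p(x) \le 1$. For the pointwise integrand, apply Young's inequality \eqref{1.15} with exponent $p = p(x)$, $a = \abs{w(x)}^{p(x)-1}$ and $b = \abs{v(x)/m}$: since $a^{p/(p-1)} = \abs{w(x)}^{p(x)}$, this yields
\[
\abs{w(x)}^{p(x)-2} w(x)\, \frac{v(x)}{m} \le \abs{w(x)}^{p(x)-1}\, \abs{\frac{v(x)}{m}} \le \Big(1 - \frac{1}{p(x)}\Big)\abs{w(x)}^{p(x)} + \frac{1}{p(x)}\,\abs{\frac{v(x)}{m}}^{p(x)}.
\]
Integrating over $\Omega$ and using $\int_\Omega (1-1/p(x))\abs{w}^{p(x)}\, dx \le \int_\Omega \abs{w}^{p(x)}\, dx = D$ together with $\int_\Omega \abs{v/m}^{p(x)}\, dx/p(x) \le 1$, I get $\frac{1}{m}\dint_\Omega \abs{w}^{p(x)-2} w\, v\, dx \le D$. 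Dividing by $D$ gives $\dualp{k'(u)}{v} \le m = \norm[p(x)]{v}$; replacing $v$ by $-v$ gives the lower bound, so $\abs{\dualp{k'(u)}{v}} \le \norm[p(x)]{v}$.

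The only mild subtlety — the step I would be most careful about — is the treatment of the set where $p(x) = 1$ is approached or where $w$ vanishes: but the hypothesis ensures $p(x) > 1$ pointwise (and in the main theorem $p^- > 1$), so Young's inequality \eqref{1.15} is legitimately applicable at every $x$, and on $\{w(x) = 0\}$ the integrand vanishes and the inequality is trivial. A second point worth a line is justifying that the numerator integral is absolutely convergent so that the manipulation is valid: this follows from the bound just derived (the positive and negative parts are each controlled by the right-hand side of Young's inequality, which is integrable since $w \in L^{p(x)}(\Omega)$ and $v/m$ has modular $\le p^+$). With these remarks the estimate \eqref{1.2} follows, and the analogous bound $\abs{\dualp{k'(u)}{v}} \le \norm[p(x)]{v}$ with the gradient in place of the function — which is what will actually be used for $K'$ — is proved verbatim the same way.
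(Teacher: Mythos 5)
Your approach is the same as the paper's --- apply Young's inequality \eqref{1.15} pointwise with $a=|u(x)/k(u)|^{p(x)-1}$, $b=|v(x)|/\norm[p(x)]{v}$, $p=p(x)$, and integrate --- but there is a bookkeeping slip at the final step. After integrating you have
\[
\int_\Omega |w|^{p(x)-1}\left|\frac{v}{m}\right|\,dx \;\le\; \int_\Omega\left(1-\frac{1}{p(x)}\right)|w|^{p(x)}\,dx \;+\; \int_\Omega\left|\frac{v}{m}\right|^{p(x)}\frac{dx}{p(x)},
\]
and you justify that the right-hand side is $\le D$ by replacing the first integral with the crude bound $\le D$ and the second with $\le 1$. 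That chain actually gives $D+1$, not $D$, and dividing by $D$ would then only yield $\abs{\dualp{k'(u)}{v}}\le m(1+1/D)$, which is strictly larger than $m$. The point --- and this is exactly how the paper argues (``the last two integrals are both equal to $1$'') --- is that the first integral equals $D-1$, not merely $\le D$: since you have already established the normalization $\int_\Omega|w|^{p(x)}\,dx/p(x)=1$, you have $\int_\Omega(1-1/p(x))|w|^{p(x)}\,dx=D-1$, and then $(D-1)+1=D$ as required. Everything else is fine (the side remarks about $p(x)>1$ pointwise, absolute convergence, and the $K'$ version are correct but not strictly needed); with this one-line correction your argument coincides with the paper's.
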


\begin{proof}
Equality holds in \eqref{1.2} if $v = 0$, so suppose $v \ne 0$. We have
\begin{equation} \label{1.3}
\abs{\dualp{k'(u)}{v}} \le \frac{\dint_\Omega \abs{\frac{u(x)}{k(u)}}^{p(x)-1} |v(x)|\, dx}{\dint_\Omega \abs{\frac{u(x)}{k(u)}}^{p(x)} dx}.
\end{equation}
Taking $a = |u(x)/k(u)|^{p(x)-1},\, b = |v(x)/k(v)|,\, p = p(x)$ in \eqref{1.15} and integrating over $\Omega$ gives
\[
\int_\Omega \abs{\frac{u(x)}{k(u)}}^{p(x)-1} \abs{\frac{v(x)}{k(v)}} dx \le \int_\Omega \abs{\frac{u(x)}{k(u)}}^{p(x)} dx - \int_\Omega \abs{\frac{u(x)}{k(u)}}^{p(x)} \frac{dx}{p(x)} + \int_\Omega \abs{\frac{v(x)}{k(v)}}^{p(x)} \frac{dx}{p(x)}.
\]
The last two integrals are both equal to $1$, so this shows that the right-hand side of \eqref{1.3} is less than or equal to $k(v) = \norm[p(x)]{v}$.
\end{proof}

\begin{lemma} \label{Lemma 1.2}
$K'$ is a mapping of type $(S_+)$, i.e., if $u_j \rightharpoonup u$ in $W^{1,p(x)}_0(\Omega)$ and
\[
\varlimsup_{j \to \infty} \dualp{K'(u_j)}{u_j - u} \le 0,
\]
then $u_j \to u$ in $W^{1,p(x)}_0(\Omega)$.
\end{lemma}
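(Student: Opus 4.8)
The plan is to reduce the statement to the classical fact that the $p(x)$-Laplacian operator $A\colon W^{1,p(x)}_0(\Omega)\to W^{1,p(x)}_0(\Omega)^\ast$, given by $\dualp{A(w)}{v}=\int_\Omega|\nabla w|^{p(x)-2}\,\nabla w\cdot\nabla v\,dx$, is of type $(S_+)$, and then to absorb the normalization. Write $\rho(f)=\int_\Omega|f(x)|^{p(x)}\,dx$. Splitting the defining identity $\int_\Omega|\nabla w/\norm[p(x)]{\nabla w}|^{p(x)}\,dx/p(x)=1$ with $p^-\le p(x)\le p^+$ shows $\rho(\nabla w)\in[p^-,p^+]$ whenever $\norm[p(x)]{\nabla w}=1$; since $\norm[p(x)]{\nabla(u/K(u))}=1$ for $u\ne0$, the formula for $K'$ in the Introduction rewrites as
\[
\dualp{K'(u)}{v}=\frac{1}{\rho(\nabla(u/K(u)))}\,\dualp{A(u/K(u))}{v},
\]
with the scalar factor trapped between $1/p^+$ and $1/p^-$ (and note $p^->1$, $p^+<\infty$ automatically, since $p$ is continuous on the compact set $\closure\Omega$).

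First I would record that $A$ is of type $(S_+)$. If $w_j\rightharpoonup w$ and $\varlimsup_j\dualp{A(w_j)}{w_j-w}\le0$, then $\dualp{A(w)}{w_j-w}\to0$ and the monotonicity of $A$ force $\dualp{A(w_j)-A(w)}{w_j-w}\to0$, so the nonnegative densities $e_j(x)=(|\nabla w_j|^{p(x)-2}\nabla w_j-|\nabla w|^{p(x)-2}\nabla w)\cdot\nabla(w_j-w)$ tend to $0$ in $L^1(\Omega)$, hence a.e.\ along a subsequence; for a.e.\ such $x$ the coercivity and strict monotonicity of $\xi\mapsto|\xi|^{p(x)-2}\xi$ give $\nabla w_j(x)\to\nabla w(x)$. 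Combined with $\rho(\nabla w_j)=\dualp{A(w_j)}{w_j-w}+\dualp{A(w_j)}{w}\to\rho(\nabla w)$ (the vector fields $|\nabla w_j|^{p(x)-2}\nabla w_j$ being bounded in $L^{p'(x)}$ and converging a.e.), a generalized dominated convergence argument applied to $|\nabla(w_j-w)|^{p(x)}\le2^{p^+-1}(|\nabla w_j|^{p(x)}+|\nabla w|^{p(x)})$ yields $\rho(\nabla(w_j-w))\to0$, i.e.\ $w_j\to w$ in $W^{1,p(x)}_0(\Omega)$; a standard subsequence argument removes the relabeling. This is classical and could simply be quoted (cf.\ \cite{MR2790542}).

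Then I would transfer to $K'$. Given $u_j\rightharpoonup u$ with $\varlimsup_j\dualp{K'(u_j)}{u_j-u}\le0$, note $\dualp{K'(u_j)}{u_j}=K(u_j)$, so if $u=0$ the hypothesis reads $\varlimsup_jK(u_j)\le0$, forcing $K(u_j)\to0$, i.e.\ $u_j\to0$. If $u\ne0$, it suffices to extract from any subsequence a further subsequence converging to $u$; along such a subsequence the bounded numbers $K(u_j)$ converge to some $t_0$, with $t_0>0$ (else $u_j\to0$ strongly, contradicting $u\ne0$). Set $w_j=u_j/K(u_j)$, so $w_j\rightharpoonup u/t_0$ and $\norm[p(x)]{\nabla w_j}=1$, whence $A(w_j)=\rho(\nabla w_j)\,K'(u_j)$ and
\[
\dualp{A(w_j)}{w_j-u/t_0}=\frac{\rho(\nabla w_j)}{K(u_j)}\,\dualp{K'(u_j)}{u_j-u}+\rho(\nabla w_j)\Big(\tfrac1{K(u_j)}-\tfrac1{t_0}\Big)\dualp{K'(u_j)}{u}.
\]
The last term vanishes in the limit (the pairings $\dualp{K'(u_j)}{u}$ are bounded, by $\norm[p(x)]{\nabla u}$, exactly as in Lemma \ref{Lemma 1.1}), while in the first term $\rho(\nabla w_j)/K(u_j)$ stays in a fixed positive bounded range; hence $\varlimsup_j\dualp{A(w_j)}{w_j-u/t_0}\le0$. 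By the $(S_+)$ property of $A$, $w_j\to u/t_0$ in $W^{1,p(x)}_0(\Omega)$, and therefore $u_j=K(u_j)\,w_j\to t_0\cdot(u/t_0)=u$.

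The main obstacle is the first step, the $(S_+)$ property of $A$ itself: the delicate passages are upgrading $L^1$-convergence of the monotonicity densities $e_j$ to a.e.\ convergence of $\nabla w_j$, and then converting a.e.\ convergence of gradients together with convergence of modulars into norm convergence in the variable-exponent space (which is where $p^+<\infty$ is used). The transfer step is then bookkeeping, the only points needing care being the behavior of $K(u_j)$ along subsequences and the boundedness of the error pairings.
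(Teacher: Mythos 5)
Your argument is correct, but it takes a genuinely different and considerably heavier route than the paper. The paper exploits the fact that $K$ \emph{is} the Luxemburg norm of the gradient: since $K$ is $1$-homogeneous, $\dualp{K'(u_j)}{u_j}=K(u_j)=\norm[p(x)]{\nabla u_j}$, and Lemma~\ref{Lemma 1.1} (applied with $\nabla u_j,\nabla u$ in place of $u,v$) gives $\dualp{K'(u_j)}{u}\le\norm[p(x)]{\nabla u}$; combining these with the $\varlimsup$ hypothesis and weak lower semicontinuity of the norm yields
\[
\varlimsup_j\norm[p(x)]{\nabla u_j}\le\norm[p(x)]{\nabla u}\le\varliminf_j\norm[p(x)]{\nabla u_j},
\]
so $\norm[p(x)]{\nabla u_j}\to\norm[p(x)]{\nabla u}$, and the Radon--Riesz property of the uniformly convex space $W^{1,p(x)}_0(\Omega)$ finishes the proof in four lines. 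No monotonicity, no a.e.\ convergence of gradients, no modular-to-norm conversion. Your approach instead normalizes to $w_j=u_j/K(u_j)$, rewrites $K'$ as a positive bounded multiple of the ordinary $p(x)$-Laplacian $A$, and reduces to the classical $(S_+)$ property of $A$, which you then prove (or quote) via the standard monotonicity-density/a.e.-gradient-convergence/Vitali argument. The bookkeeping in the transfer step (handling $K(u_j)\to t_0>0$ along subsequences, bounding $\dualp{K'(u_j)}{u}$, absorbing the scalar factor $\rho(\nabla w_j)/K(u_j)\in[p^-/c_2,p^+/c_1]$ into the $\varlimsup$) is all correct. What your route buys is robustness: it would apply to normalizations of more general monotone operators that are not norms, and it makes the link to the classical theory explicit. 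What it costs is length and the need to invoke, or reprove, the full $(S_+)$ machinery for $A$; the paper's proof shows that in this particular case, the structure of $K$ as a norm collapses the whole argument to weak lower semicontinuity plus uniform convexity, bypassing Lemma~\ref{Lemma 1.2}'s ``hard'' content entirely.
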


\begin{proof}
Since
\[
\dualp{K'(u_j)}{u_j} = K(u_j) = \norm[p(x)]{\nabla u_j}
\]
and
\[
\dualp{K'(u_j)}{u} = \dualp{k'(\nabla u_j)}{\nabla u} \le \norm[p(x)]{\nabla u}
\]
by Lemma \ref{Lemma 1.1},
\[
\varlimsup_{j \to \infty}\, \norm[p(x)]{\nabla u_j} \le \varlimsup_{j \to \infty} \dualp{K'(u_j)}{u_j - u} + \norm[p(x)]{\nabla u} \le \norm[p(x)]{\nabla u} \le \varliminf_{j \to \infty}\, \norm[p(x)]{\nabla u_j},
\]
so that $\norm[p(x)]{\nabla u_j} \to \norm[p(x)]{\nabla u}$. The conclusion follows since $W^{1,p(x)}_0(\Omega)$ is uniformly convex.
\end{proof}

\begin{lemma}
For all $c \in \R$, $\widetilde{K}$ satisfies the {\em \PS{c}} condition, i.e., every sequence $\seq{u_j} \subset \M$ such that $\widetilde{K}(u_j) \to c$ and $\widetilde{K}'(u_j) \to 0$ has a convergent subsequence.
\end{lemma}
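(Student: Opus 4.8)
The plan is to deduce this from the $(S_+)$ property of $K'$ (Lemma \ref{Lemma 1.2}) together with the Lagrange multiplier rule on $\M$. Take $\seq{u_j} \subset \M$ with $\widetilde{K}(u_j) \to c$ and $\widetilde{K}'(u_j) \to 0$. First I would note that $\widetilde{K}(u_j) = K(u_j) = \norm[p(x)]{\nabla u_j}$, so $\seq{u_j}$ is bounded in $W^{1,p(x)}_0(\Omega)$, since $\norm[p(x)]{\nabla\,\cdot\,}$ is an equivalent norm there. By reflexivity, along a subsequence $u_j \rightharpoonup u$ in $W^{1,p(x)}_0(\Omega)$, and by the compact embedding $W^{1,p(x)}_0(\Omega) \hookrightarrow L^{p(x)}(\Omega)$ (valid for bounded $\Omega$ and $p \in C(\closure{\Omega},(1,\infty))$; see \cite{MR2790542}) also $u_j \to u$ in $L^{p(x)}(\Omega)$. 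Since $\norm[p(x)]{u_j} = 1$ for every $j$, this forces $\norm[p(x)]{u} = 1$, i.e.\ $u \in \M$; in particular $u \ne 0$, so that $k'(u)$ and each $K'(u_j)$, $k'(u_j)$ make sense.

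Next I would rewrite the condition $\widetilde{K}'(u_j) \to 0$. Because $\M = k^{-1}(1)$ and $\dualp{k'(v)}{v} = k(v) = 1 \ne 0$ on $\M$ (homogeneity of degree one gives $\dualp{k'(v)}{v} = k(v)$ and, likewise, $\dualp{K'(v)}{v} = K(v)$), $\M$ is a $C^1$ Banach submanifold of $W^{1,p(x)}_0(\Omega)$ with $T_v\M = \ker k'(v)$, and $\widetilde{K}'(v)$ is the class of $K'(v)$ modulo the line $\R\, k'(v)$ in the dual space. Hence $\widetilde{K}'(u_j) \to 0$ is equivalent to the existence of reals $\mu_j$ with $K'(u_j) - \mu_j\, k'(u_j) \to 0$ in the dual of $W^{1,p(x)}_0(\Omega)$. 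Evaluating this relation at the bounded sequence $u_j$ and using $\dualp{K'(u_j)}{u_j} = K(u_j)$ and $\dualp{k'(u_j)}{u_j} = 1$, I obtain $\mu_j = K(u_j) + \o(1) \to c$; in particular $\seq{\mu_j}$ is bounded.

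Then I would identify the limit of $k'(u_j)$. From $u_j \to u$ in $L^{p(x)}(\Omega)$ and $u \ne 0$ one has $k(u_j) \to k(u) > 0$, the modulars $\int_\Omega \abs{u_j/k(u_j)}^{p(x)}\, dx$ converge (they stay in $[p^-,p^+]$, so the denominator in the formula for $k'$ is bounded away from $0$ and $\infty$), and the Nemytskii operator $w \mapsto \abs{w}^{p(x)-2}\, w$ is continuous from $L^{p(x)}(\Omega)$ into $L^{p(x)/(p(x)-1)}(\Omega)$; combining these, $k'(u_j) \to k'(u)$ strongly in $L^{p(x)/(p(x)-1)}(\Omega)$, hence in the dual of $W^{1,p(x)}_0(\Omega)$, so that $\dualp{k'(u_j)}{u} \to \dualp{k'(u)}{u} = k(u) = 1$. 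Putting the pieces together, and using that $u_j - u$ is bounded, $\dualp{k'(u_j)}{u_j} = 1$, $\mu_j \to c$ and $\dualp{k'(u_j)}{u} \to 1$,
\[
\dualp{K'(u_j)}{u_j - u} = \mu_j\, \dualp{k'(u_j)}{u_j - u} + \o(1) = \mu_j\bigl(1 - \dualp{k'(u_j)}{u}\bigr) + \o(1) \longrightarrow 0 .
\]
In particular $\varlimsup_j \dualp{K'(u_j)}{u_j - u} \le 0$, so Lemma \ref{Lemma 1.2} yields $u_j \to u$ in $W^{1,p(x)}_0(\Omega)$, with $u \in \M$, as required.

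\textbf{Expected main obstacle.} Nothing here is deep once Lemmas \ref{Lemma 1.1} and \ref{Lemma 1.2} are available; the only points that need care are (i) the justification of the Lagrange multiplier rule together with the boundedness of the multipliers $\seq{\mu_j}$ in the variable-exponent framework, and (ii) the precise continuity statements for the modular and for the Nemytskii operator on $L^{p(x)}(\Omega)$ (ensuring that $k'$ is sequentially continuous along $u_j \to u$ in $L^{p(x)}(\Omega)$). After that, the $(S_+)$ property of $K'$ does all the remaining work.
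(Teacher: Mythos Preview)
Your argument is correct and follows the same overall strategy as the paper: Lagrange multipliers on $\M$, boundedness of $(u_j)$ in $W^{1,p(x)}_0(\Omega)$, weak convergence plus compact embedding into $L^{p(x)}(\Omega)$, and then the $(S_+)$ property of $K'$ (Lemma~\ref{Lemma 1.2}). The only notable difference is in how you show $\dualp{k'(u_j)}{u_j-u}\to 0$: you establish the strong convergence $k'(u_j)\to k'(u)$ via continuity of the Nemytskii map and of the modular, whereas the paper simply invokes Lemma~\ref{Lemma 1.1} to get $\abs{\dualp{k'(u_j)}{u_j-u}}\le \norm[p(x)]{u_j-u}\to 0$ in one line. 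Your route is perfectly valid and even shows a bit more (norm convergence of $k'(u_j)$), but the paper's shortcut via Lemma~\ref{Lemma 1.1} avoids the Nemytskii/modular continuity discussion entirely and makes the compactness proof essentially immediate once Lemmas~\ref{Lemma 1.1} and~\ref{Lemma 1.2} are in hand.
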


\begin{proof}
We have
\begin{equation} \label{1.4}
K(u_j) \to c, \qquad K'(u_j) - c_j\, k'(u_j) \to 0
\end{equation}
for some sequence $\seq{c_j} \subset \R$. Since $\dualp{K'(u_j)}{u_j} = K(u_j)$ and $\dualp{k'(u_j)}{u_j} = k(u_j) = 1$, $c_j \to c$. Since $\seq{u_j}$ is bounded in $W^{1,p(x)}_0(\Omega)$, for a renamed subsequence and some $u \in W^{1,p(x)}_0(\Omega)$, $u_j \rightharpoonup u$ in $W^{1,p(x)}_0(\Omega)$ and $u_j \to u$ in $L^{p(x)}(\Omega)$. By Lemma \ref{Lemma 1.1},
\[
\abs{\dualp{k'(u_j)}{u_j - u}} \le \norm[p(x)]{u_j - u} \to 0,
\]
so the second limit in \eqref{1.4} now gives $\dualp{K'(u_j)}{u_j - u} \to 0$ as $j \to \infty$. Then we conclude that $u_j \to u$ strongly in $W^{1,p(x)}_0(\Omega)$, in light of Lemma \ref{Lemma 1.2}.
\end{proof}

\section{Proof of Theorem \ref{mainth}}

Let $\sigma$, $\tau$, and $\kappa$ be as in Theorem \ref{mainth}.

\begin{lemma} \label{Lemma 1.7}
We have
\begin{equation} \label{1.7}
\frac{\norm[p^-]{u}}{(1 + \tau)^{1/p^-}} \le \norm[p(x)]{u} \le \frac{\norm[p^+]{u}}{(1 - \tau)^{1/p^+}} \qquad \forall u \in L^{p^+}(\Omega),
\end{equation}
and hence
\[
\frac{1}{\kappa}\, \frac{\norm[p^-]{\nabla u}}{\norm[p^+]{u}} \le \frac{\norm[p(x)]{\nabla u}}{\norm[p(x)]{u}} \le \kappa\, \frac{\norm[p^+]{\nabla u}}{\norm[p^-]{u}} \qquad \forall u \in W^{1,p^+}(\Omega) \setminus \set{0}.
\]
\end{lemma}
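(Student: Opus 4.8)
The plan is to deduce \eqref{1.7} from a single elementary pointwise inequality for the integrand of the Luxemburg modular, integrate that inequality over $\Omega$, and then combine the two resulting norm estimates by applying them to $\nabla u$ and to $u$ and dividing.

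First I would record the elementary fact that for every $s \ge 0$ and every pair $1 < a \le b < \infty$,
\[
\frac{s^a}{a} \le \frac{s^b}{b} + \frac1a - \frac1b .
\]
Indeed, $\phi(s) := \dfrac{s^b}{b} - \dfrac{s^a}{a} + \dfrac1a - \dfrac1b$ has derivative $\phi'(s) = s^{b-1} - s^{a-1}$, which is negative on $(0,1)$ and positive on $(1,\infty)$, so $\phi$ attains its minimum at $s = 1$, where $\phi(1) = 0$. Since $p^- \le p(x) \le p^+$ forces $\frac1a - \frac1b \le \frac1{p^-} - \frac1{p^+}$ whenever $a, b \in [p^-,p^+]$, we obtain, for all such $a \le b$ and all $s \ge 0$,
\[
\frac{s^a}{a} \le \frac{s^b}{b} + \Big(\frac1{p^-} - \frac1{p^+}\Big).
\]

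For the right-hand inequality in \eqref{1.7} I may assume $u \ne 0$ (the case $u=0$ being trivial), set $\nu := \norm[p^+]{u}$, which is finite because $u \in L^{p^+}(\Omega)$, and put $\mu := \nu/(1-\tau)^{1/p^+}$, well defined since $\tau < 1$. Applying the displayed inequality with $a = p(x)$, $b = p^+$ to $s = |u(x)|/\mu$ and integrating over $\Omega$, and using $|u/\mu|^{p^+} = (1-\tau)\,|u/\nu|^{p^+}$ together with the unit-ball property $\int_\Omega |u/\nu|^{p^+}\,\frac{dx}{p^+} \le 1$, one finds $\int_\Omega |u/\mu|^{p(x)}\,\frac{dx}{p(x)} \le (1-\tau) + (\frac1{p^-}-\frac1{p^+})|\Omega| = (1-\tau) + \tau = 1$, hence $\norm[p(x)]{u} \le \mu$. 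Symmetrically, for the left-hand inequality I would set $\nu := \norm[p(x)]{u}$ (finite by what was just shown) and $\mu := (1+\tau)^{1/p^-}\nu$, apply the inequality with $a = p^-$, $b = p(x)$ to $s = |u(x)|/\nu$, integrate, and use $\int_\Omega |u/\nu|^{p(x)}\,\frac{dx}{p(x)} \le 1$ to get $\int_\Omega |u/\mu|^{p^-}\,\frac{dx}{p^-} \le \frac{1}{1+\tau}(1+\tau) = 1$, i.e. $\norm[p^-]{u} \le (1+\tau)^{1/p^-}\norm[p(x)]{u}$.

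Finally, the quotient estimate follows by applying \eqref{1.7} separately to $\nabla u$ and to $u$, both of which lie in $L^{p^+}(\Omega)$ when $u \in W^{1,p^+}(\Omega)$, with $u \ne 0$ so that the denominators do not vanish: bounding the numerator $\norm[p(x)]{\nabla u}$ below via $\norm[p^-]{\nabla u}/(1+\tau)^{1/p^-}$ and the denominator $\norm[p(x)]{u}$ above via $\norm[p^+]{u}/(1-\tau)^{1/p^+}$ yields the lower bound with the factor $1/\kappa$, and the two reversed choices yield the upper bound with the factor $\kappa$, where $\kappa = (1+\tau)^{1/p^-}/(1-\tau)^{1/p^+}$. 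There is no serious obstacle here; the only points requiring care are the bookkeeping with $\tau$ and the additive constant $\frac1{p^-}-\frac1{p^+}$, and the use of the unit-ball identity for the modular carrying the nonstandard weight $dx/p(x)$, which is the usual property of variable-exponent Lebesgue spaces since $p^+ < \infty$.
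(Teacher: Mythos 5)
Your proof is correct and follows essentially the same route as the paper: the elementary pointwise inequality $s^a/a \le s^b/b + 1/a - 1/b$ you establish by calculus is just the special case $a=1,\, b=s^a,\, p=b/a$ of the Young inequality \eqref{1.15} that the paper uses directly, and integrating it with the same choices of exponents (namely $(p^-, p(x))$ and $(p(x), p^+)$) recovers the paper's two estimates. The concluding step of applying \eqref{1.7} to $\nabla u$ and to $u$ and dividing is likewise identical.
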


\begin{proof}
Equality holds throughout \eqref{1.7} if $u = 0$, so suppose $u \ne 0$. Taking $a = 1,\, b = |u(x)/\norm[p(x)]{u}|^{p^-},\, p = p(x)/p^-$ in \eqref{1.15}, dividing by $p^-$, and integrating over $\Omega$ gives
\[
\frac{1}{\norm[p(x)]{u}^{p^-}} \int_\Omega |u(x)|^{p^-}\, \frac{dx}{p^-} \le \int_\Omega \left(\frac{1}{p^-} - \frac{1}{p(x)}\right) dx + \int_\Omega \bigg|\frac{u(x)}{\norm[p(x)]{u}}\bigg|^{p(x)} \frac{dx}{p(x)}.
\]
The first integral is equal to $\norm[p^-]{u}^{p^-}$ and the last integral is equal to $1$, so this gives the first inequality in \eqref{1.7}. Now taking $a = 1,\, b = |u(x)/\norm[p(x)]{u}|^{p(x)},\, p = p^+/p(x)$ in \eqref{1.15}, dividing by $p(x)$, and integrating over $\Omega$ gives
\[
\int_\Omega \bigg|\frac{u(x)}{\norm[p(x)]{u}}\bigg|^{p(x)} \frac{dx}{p(x)} \le \int_\Omega \left(\frac{1}{p(x)} - \frac{1}{p^+}\right) dx + \frac{1}{\norm[p(x)]{u}^{p^+}} \int_\Omega |u(x)|^{p^+}\, \frac{dx}{p^+}.
\]
The first integral is equal to $1$ and the last integral is equal to $\norm[p^+]{u}^{p^+}$, so this gives the second inequality in \eqref{1.7}.
\end{proof}

Recall that the genus and the cogenus of $M \in \F$ are defined by
\[
\gamma(M) = \inf\, \bgset{m \ge 1 : \exists \text{ an odd continuous map } g : M \to S^{m-1}}
\]
and
\[
\widetilde{\gamma}(M) = \sup\, \bgset{\widetilde{m} \ge 1 : \exists \text{ an odd continuous map } \widetilde{g} : S^{\widetilde{m}-1} \to M},
\]
respectively. If there are odd continuous maps $S^{\widetilde{m} - 1} \to M \to S^{m - 1}$, then $\widetilde{m} \le i(M) \le m$ by the monotonicity of the index, so $\widetilde{\gamma}(M) \le i(M) \le \gamma(M)$. Since $\widetilde{K}^\lambda \subset \widetilde{L}^\lambda$, this gives
\begin{equation} \label{1.8}
\widetilde{\gamma}(\widetilde{K}^\lambda) \le i(\widetilde{K}^\lambda) \le i(\widetilde{L}^\lambda) \le \gamma(\widetilde{L}^\lambda) \qquad \forall \lambda \in \R.
\end{equation}
Set
\[
\widehat{K}(u) := \norm[p^+]{\nabla u}, \quad u \in \widehat{\M} := \bgset{u \in W^{1,p^+}_0(\Omega) : \norm[p^-]{u} = 1}
\]
and
\[
\widehat{L}(u) := \norm[p^-]{\nabla u}, \quad u \in \widehat{\N} := \bgset{u \in W^{1,p^+}(\Omega) : \norm[p^+]{u} = 1},
\]
and let $\widehat{K}^\lambda = \bgset{u \in \widehat{\M} : \widehat{K}(u) < \lambda}$ and $\widehat{L}^\mu = \bgset{u \in \widehat{\N} : \widehat{L}(u) < \mu}$.

\begin{lemma} \label{Lemma 1.8}
We have
\[
\widetilde{\gamma}(\widehat{K}^{\lambda/\kappa}) \le \widetilde{\gamma}(\widetilde{K}^\lambda), \qquad \gamma(\widetilde{L}^\lambda) \le \gamma(\widehat{L}^{\kappa\, \lambda}) \qquad \forall \lambda \in \R.
\]
\end{lemma}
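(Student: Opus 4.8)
The plan is to obtain both inequalities from the monotonicity of the genus and of the cogenus under odd continuous maps: if $h\colon A\to B$ is odd and continuous, then composing on the left with odd maps $S^{m-1}\to A$ gives $\widetilde\gamma(A)\le\widetilde\gamma(B)$, and composing on the right with odd maps $B\to S^{m-1}$ gives $\gamma(A)\le\gamma(B)$ (cf.\ the derivation of \eqref{1.8}). So it suffices to produce an odd continuous map $\widehat K^{\lambda/\kappa}\to\widetilde K^\lambda$ for the first inequality and an odd continuous map $\widetilde L^\lambda\to\widehat L^{\kappa\lambda}$ for the second. In both cases the map is the radial projection onto the relevant Luxemburg/Lebesgue unit sphere, along which the Rayleigh quotient is constant, and the containment of the image in the prescribed sublevel set is a one line consequence of Lemma~\ref{Lemma 1.7}.

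For the first map, note that on the bounded domain $\Omega$ one has $L^{p^+}(\Omega)\subset L^{p(x)}(\Omega)$, and by the right hand inequality in \eqref{1.7} the inclusion $W^{1,p^+}_0(\Omega)\hookrightarrow W^{1,p(x)}_0(\Omega)$ is continuous; moreover the left hand inequality in \eqref{1.7} gives $\norm[p(x)]{u}\ge(1+\tau)^{-1/p^-}\norm[p^-]{u}=(1+\tau)^{-1/p^-}>0$ on $\widehat{\M}$, while $u\mapsto\norm[p(x)]{u}$ is continuous on $W^{1,p(x)}_0(\Omega)$. Hence $\Phi(u):=u/\norm[p(x)]{u}$ is a well defined odd continuous map $\widehat{\M}\to\M$, and for $u\in\widehat K^{\lambda/\kappa}$, i.e.\ $\norm[p^-]{u}=1$ and $\norm[p^+]{\nabla u}<\lambda/\kappa$, Lemma~\ref{Lemma 1.7} gives
\[
\widetilde K(\Phi(u))=\frac{\norm[p(x)]{\nabla u}}{\norm[p(x)]{u}}\le\kappa\,\frac{\norm[p^+]{\nabla u}}{\norm[p^-]{u}}=\kappa\,\norm[p^+]{\nabla u}<\lambda ,
\]
so $\Phi$ restricts to an odd continuous map $\widehat K^{\lambda/\kappa}\to\widetilde K^\lambda$, whence $\widetilde\gamma(\widehat K^{\lambda/\kappa})\le\widetilde\gamma(\widetilde K^\lambda)$.

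For the second map one argues dually with $\Psi(u):=u/\norm[p^+]{u}$ and the left hand inequality in \eqref{1.7}: for $u\in\widetilde L^\lambda$, i.e.\ $\norm[p(x)]{u}=1$ and $\norm[p(x)]{\nabla u}<\lambda$,
\[
\widehat L(\Psi(u))=\frac{\norm[p^-]{\nabla u}}{\norm[p^+]{u}}\le\kappa\,\frac{\norm[p(x)]{\nabla u}}{\norm[p(x)]{u}}=\kappa\,\norm[p(x)]{\nabla u}<\kappa\lambda .
\]
The point that needs care --- and the main obstacle --- is that $\Psi$ must take values in $\widehat{\N}$. Here $\nabla u\in L^{p(x)}(\Omega)\subset L^{p^-}(\Omega)$, which is all that $\widehat L$ sees, but a generic $u\in W^{1,p(x)}(\Omega)$ need not lie in $W^{1,p^+}(\Omega)$; what one genuinely needs is $u\in L^{p^+}(\Omega)$, in order to normalize in the $L^{p^+}$ norm, and this is exactly where the hypothesis $\sigma<1$ of Theorem~\ref{mainth} enters, via the continuous embeddings $W^{1,p(x)}(\Omega)\subset W^{1,p^-}(\Omega)\hookrightarrow L^{p^+}(\Omega)$ (the second because $p^+<(p^-)^\ast$). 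Reading $\widehat L$, $\widehat{\N}$ and $\widehat L^{\kappa\lambda}$ on the natural domain $\{u:u\in L^{p^+}(\Omega),\ \nabla u\in L^{p^-}(\Omega)\}\supseteq W^{1,p^+}(\Omega)$ --- which does not change $\gamma(\widehat L^{\kappa\lambda})$, as one checks by exhausting that open set by symmetric compacta and mollifying --- one obtains a well defined odd continuous map $\widetilde L^\lambda\to\widehat L^{\kappa\lambda}$, hence $\gamma(\widetilde L^\lambda)\le\gamma(\widehat L^{\kappa\lambda})$. Apart from this function space bookkeeping, all that remains is the two displayed estimates together with the routine continuity and non-vanishing checks for $\norm[p(x)]{\cdot}$ and $\norm[p^+]{\cdot}$ along the two projections.
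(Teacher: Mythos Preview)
Your treatment of the first inequality is exactly the paper's: the radial projection $u\mapsto u/\norm[p(x)]{u}$ together with Lemma~\ref{Lemma 1.7} gives the required odd continuous map $\widehat K^{\lambda/\kappa}\to\widetilde K^\lambda$.

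For the second inequality you and the paper diverge. You correctly identify the obstacle, namely that $u\in W^{1,p(x)}(\Omega)$ need not lie in $W^{1,p^+}(\Omega)$, so $\Psi(u)=u/\norm[p^+]{u}$ does not land in $\widehat{\N}$ as defined. Your remedy is to \emph{enlarge the target}: use the Sobolev embedding $W^{1,p^-}(\Omega)\hookrightarrow L^{p^+}(\Omega)$ (valid because $\sigma<1$ forces $p^+<(p^-)^\ast$) to guarantee $u\in L^{p^+}$, replace $\widehat{\N}$ by its analogue in the larger space $\{u:u\in L^{p^+},\ \nabla u\in L^{p^-}\}$, and then argue that this does not change $\gamma(\widehat L^{\kappa\lambda})$. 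The paper instead \emph{restricts the source}: it considers the odd continuous map
\[
\widetilde L^\lambda\cap W^{1,p^+}(\Omega)\to\widehat L^{\kappa\lambda},\qquad u\mapsto u/\norm[p^+]{u},
\]
and invokes Palais's theorem \cite[Theorem~17]{MR0189028} to conclude that the inclusion $\widetilde L^\lambda\cap W^{1,p^+}(\Omega)\hookrightarrow\widetilde L^\lambda$ is a homotopy equivalence (since $W^{1,p^+}(\Omega)$ is a dense linear subspace of $W^{1,p(x)}(\Omega)$), hence $\gamma(\widetilde L^\lambda)=\gamma(\widetilde L^\lambda\cap W^{1,p^+}(\Omega))\le\gamma(\widehat L^{\kappa\lambda})$.

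The paper's route is cleaner for two reasons. First, your claim that enlarging $\widehat L^{\kappa\lambda}$ does not change its genus (``exhausting by symmetric compacta and mollifying'') is really the same Palais density argument you are trying to avoid, only applied on the target side; as written it is a hand-wave rather than a proof. Second, your use of the Sobolev embedding $W^{1,p^-}(\Omega)\hookrightarrow L^{p^+}(\Omega)$ brings in the hypothesis $\sigma<1$ and, more seriously, implicitly requires $\Omega$ to be an extension domain (e.g.\ Lipschitz boundary), which the paper does not assume. The paper's argument needs only that $C^\infty(\closure\Omega)$, hence $W^{1,p^+}(\Omega)$, is dense in $W^{1,p(x)}(\Omega)$, and then Palais's theorem does the rest.
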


\begin{proof}
Lemma \ref{Lemma 1.7} gives the odd continuous maps
\[
\widehat{K}^{\lambda/\kappa} \to \widetilde{K}^\lambda, \hquad u \mapsto \frac{u}{\norm[p(x)]{u}}, \qquad \widetilde{L}^\lambda \cap W^{1,p^+}(\Omega) \to \widehat{L}^{\kappa\, \lambda}, \hquad u \mapsto \frac{u}{\norm[p^+]{u}},
\]
and the inclusion $\widetilde{L}^\lambda \cap W^{1,p^+}(\Omega) \incl \widetilde{L}^\lambda$ is a homotopy equivalence by Palais \cite[Theorem 17]{MR0189028} since $W^{1,p^+}(\Omega)$ is a dense linear subspace of $W^{1,p(x)}(\Omega)$, so the conclusion follows.
\end{proof}

\begin{lemma} \label{Lemma 1.6}
Let $0 < \delta < 1$, consider the homothety $\Omega \to \delta\, \Omega,\, x \mapsto \delta x =: y$, and write $u(x) = v(y)$. Then
\[
\frac{\norm[p^+]{\nabla v}}{\norm[p^-]{v}} = \delta^{- \sigma - 1}\, \frac{\norm[p^+]{\nabla u}}{\norm[p^-]{u}}, \qquad \frac{\norm[p^-]{\nabla v}}{\norm[p^+]{v}} = \delta^{\sigma - 1}\, \frac{\norm[p^-]{\nabla u}}{\norm[p^+]{u}} \qquad \forall u \in W^{1,p^+}(\Omega) \setminus \set{0}.
\]
\end{lemma}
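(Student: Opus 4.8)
The plan is to reduce everything to the elementary change of variables $y = \delta x$ in the integrals defining the constant-exponent Lebesgue norms. Fix a constant $q \in [1,\infty)$; I will apply what follows with $q = p^-$ and with $q = p^+$. From $u(x) = v(y) = v(\delta x)$ and $dy = \delta^n\, dx$ one gets
\[
\norm[q]{v}^q = \int_{\delta \Omega} \abs{v(y)}^q\, dy = \delta^n \int_\Omega \abs{u(x)}^q\, dx = \delta^n\, \norm[q]{u}^q,
\]
so $\norm[q]{v} = \delta^{n/q}\, \norm[q]{u}$. For the gradient, the chain rule gives $\nabla_y v(y) = \delta^{-1}\, (\nabla u)(y/\delta)$, whence $\abs{\nabla_y v(y)} = \delta^{-1}\, \abs{\nabla u(x)}$ with $x = y/\delta$, and the same change of variables yields
\[
\norm[q]{\nabla v}^q = \int_{\delta \Omega} \abs{\nabla_y v(y)}^q\, dy = \delta^{-q}\, \delta^n \int_\Omega \abs{\nabla u(x)}^q\, dx = \delta^{n - q}\, \norm[q]{\nabla u}^q,
\]
so $\norm[q]{\nabla v} = \delta^{n/q - 1}\, \norm[q]{\nabla u}$. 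These manipulations are legitimate because $x \mapsto \delta x$ is a bi-Lipschitz diffeomorphism of $\Omega$ onto $\delta \Omega$, so $v \in W^{1,p^+}(\delta \Omega) \setminus \set{0}$ whenever $u \in W^{1,p^+}(\Omega) \setminus \set{0}$.

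With these two scaling identities in hand the lemma is just bookkeeping of exponents. For the first ratio,
\[
\frac{\norm[p^+]{\nabla v}}{\norm[p^-]{v}} = \frac{\delta^{n/p^+ - 1}\, \norm[p^+]{\nabla u}}{\delta^{n/p^-}\, \norm[p^-]{u}} = \delta^{\, n/p^+ - n/p^- - 1}\, \frac{\norm[p^+]{\nabla u}}{\norm[p^-]{u}},
\]
and since $n/p^+ - n/p^- = - n(1/p^- - 1/p^+) = - \sigma$, the exponent equals $- \sigma - 1$. Symmetrically,
\[
\frac{\norm[p^-]{\nabla v}}{\norm[p^+]{v}} = \frac{\delta^{n/p^- - 1}\, \norm[p^-]{\nabla u}}{\delta^{n/p^+}\, \norm[p^+]{u}} = \delta^{\, n/p^- - n/p^+ - 1}\, \frac{\norm[p^-]{\nabla u}}{\norm[p^+]{u}},
\]
and $n/p^- - n/p^+ = \sigma$, giving the exponent $\sigma - 1$. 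This is exactly the claim.

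There is no genuine obstacle here; the only points to watch are that the two $\delta$-powers in $\norm[q]{\nabla v}$ come from genuinely different sources (the Jacobian factor $\delta^n$ of the homothety and the factor $\delta^{-1}$ produced by differentiating the rescaled argument), and that one does not accidentally interchange the roles of $p^-$ and $p^+$ when forming the two ratios. One could alternatively phrase the argument via the rescaling $u_\delta(x) := u(x/\delta)$ on $\R^n$ and then restrict to $\delta\Omega$, but the direct computation above seems cleanest and makes the bi-Lipschitz regularity transfer transparent.
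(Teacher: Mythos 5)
Your computation is correct and is exactly the change-of-variables argument the paper has in mind; the paper's own proof is the single word ``Straightforward,'' and your proposal simply supplies the scaling identities $\norm[q]{v} = \delta^{n/q}\norm[q]{u}$ and $\norm[q]{\nabla v} = \delta^{n/q-1}\norm[q]{\nabla u}$ that make it so.
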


\begin{proof}
Straightforward.
\end{proof}

\begin{lemma} \label{Lemma 1.5}
If $\Omega_1$ and $\Omega_2$ are disjoint subdomains of $\Omega$ such that $\closure{\Omega}_1 \cup \closure{\Omega}_2 = \closure{\Omega}$, then
\[
\widetilde{\gamma}(\widehat{K}_{\Omega_1}^\lambda) + \widetilde{\gamma}(\widehat{K}_{\Omega_2}^\lambda) \le \widetilde{\gamma}(\widehat{K}_\Omega^\lambda), \qquad \gamma(\widehat{L}_\Omega^\lambda) \le \gamma(\widehat{L}_{\Omega_1}^{\lambda'}) + \gamma(\widehat{L}_{\Omega_2}^{\lambda'}) \qquad \forall \lambda < \lambda',
\]
where the subscripts indicate the corresponding domains.
\end{lemma}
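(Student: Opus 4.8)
The plan is to handle the two inequalities separately but by the same underlying device: transplanting odd maps between spheres and the sublevel sets, using the product/additivity structure of disjoint domains. For the cogenus inequality, suppose $\widetilde{\gamma}(\widehat{K}_{\Omega_1}^\lambda) = \widetilde{m}_1$ and $\widetilde{\gamma}(\widehat{K}_{\Omega_2}^\lambda) = \widetilde{m}_2$, so that there are odd continuous maps $\widetilde{g}_i : S^{\widetilde{m}_i - 1} \to \widehat{K}_{\Omega_i}^\lambda$. A function in $\widehat{K}_{\Omega_i}^\lambda$ is a $W^{1,p^+}_0(\Omega_i)$ function with $\norm[p^-]{u} = 1$ and $\norm[p^+]{\nabla u} < \lambda$; extending it by zero gives a function in $W^{1,p^+}_0(\Omega)$. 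The idea is to combine $\widetilde{g}_1$ and $\widetilde{g}_2$ into an odd map from $S^{\widetilde{m}_1 + \widetilde{m}_2 - 1}$ (viewed as the join $S^{\widetilde{m}_1 - 1} \ast S^{\widetilde{m}_2 - 1}$, i.e., pairs $(s\, \xi_1, t\, \xi_2)$ with $s^2 + t^2 = 1$, $s, t \ge 0$) into $\widehat{K}_\Omega^\lambda$, via $(s\,\xi_1, t\,\xi_2) \mapsto$ the normalization of $\alpha(s)\, \widetilde{g}_1(\xi_1) + \beta(t)\, \widetilde{g}_2(\xi_2)$, where the two summands have disjoint supports $\Omega_1, \Omega_2$. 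I would choose the weights $\alpha(s), \beta(t) \ge 0$ so that the $L^{p^-}$-norm of the combination equals $1$ automatically; since $\norm[p^-]{u_1 + u_2}^{p^-} = \norm[p^-]{u_1}^{p^-} + \norm[p^-]{u_2}^{p^-}$ for disjointly supported $u_i$, taking $\alpha(s) = s^{2/p^-}$ and $\beta(t) = t^{2/p^-}$ makes the sum have $L^{p^-}$-norm exactly $1$ without any renormalization. One then checks that $\norm[p^+]{\nabla(\alpha\, \widetilde{g}_1(\xi_1) + \beta\, \widetilde{g}_2(\xi_2))}^{p^+} = \alpha^{p^+}\norm[p^+]{\nabla \widetilde{g}_1(\xi_1)}^{p^+} + \beta^{p^+}\norm[p^+]{\nabla \widetilde{g}_2(\xi_2)}^{p^+} < (\alpha^{p^+} + \beta^{p^+})\, \lambda^{p^+} \le \lambda^{p^+}$, the last step because $\alpha^{p^+} + \beta^{p^+} = s^{2p^+/p^-} + t^{2p^+/p^-} \le s^2 + t^2 = 1$ since $p^+ \ge p^-$ and $s,t \in [0,1]$. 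The map is continuous, and it is odd in the join coordinates (negating $(s\xi_1, t\xi_2) \mapsto (s(-\xi_1), t(-\xi_2))$, with $\widetilde{g}_i$ odd), so it exhibits an odd map $S^{\widetilde{m}_1 + \widetilde{m}_2 - 1} \to \widehat{K}_\Omega^\lambda$, giving $\widetilde{\gamma}(\widehat{K}_\Omega^\lambda) \ge \widetilde{m}_1 + \widetilde{m}_2$, as claimed.

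For the genus inequality, I would argue dually. Suppose $\gamma(\widehat{L}_{\Omega_i}^{\lambda'}) = m_i$, so there are odd continuous maps $h_i : \widehat{L}_{\Omega_i}^{\lambda'} \to S^{m_i - 1}$. Given $u \in \widehat{L}_\Omega^\lambda$ — that is, $u \in W^{1,p^+}(\Omega)$ with $\norm[p^+]{u} = 1$, $\norm[p^-]{\nabla u} < \lambda$ — restrict to $\Omega_1$ and $\Omega_2$ to get $u_i := \restr{u}{\Omega_i} \in W^{1,p^+}(\Omega_i)$; since $\closure{\Omega}_1 \cup \closure{\Omega}_2 = \closure{\Omega}$ and the $\Omega_i$ are disjoint, $\norm[p^+]{u_1}^{p^+} + \norm[p^+]{u_2}^{p^+} = 1$ and $\norm[p^-]{\nabla u_1}^{p^-} + \norm[p^-]{\nabla u_2}^{p^-} = \norm[p^-]{\nabla u}^{p^-} < \lambda^{p^-}$, so in particular each $\norm[p^-]{\nabla u_i} < \lambda < \lambda'$. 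The difficulty is that $u_i$ need not be a unit vector in $L^{p^+}(\Omega_i)$, so $u_i$ is not literally a point of $\widehat{L}_{\Omega_i}^{\lambda'}$; and $u_i$ may even vanish. To fix this I would normalize: where $u_i \ne 0$, set $\widehat{u}_i := u_i / \norm[p^+]{u_i}$, and check $\norm[p^-]{\nabla \widehat{u}_i} = \norm[p^-]{\nabla u_i} / \norm[p^+]{u_i} < \lambda / \norm[p^+]{u_i}$ — which is $< \lambda'$ only when $\norm[p^+]{u_i}$ is not too small, so the naive normalization fails near the locus where one of the $u_i$ is small. The standard remedy, and the one I would use, is the join construction in reverse: $h_1$ and $h_2$ together define an odd map into the join $S^{m_1 - 1} \ast S^{m_2 - 1} = S^{m_1 + m_2 - 1}$ by sending $u \mapsto (\norm[p^+]{u_1}\, h_1(\widehat u_1),\ \norm[p^+]{u_2}\, h_2(\widehat u_2))$, interpreting the $i$-th coordinate as the cone point $0$ of the join when $u_i = 0$. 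This is well-defined on all of $\widehat{L}_\Omega^\lambda$ provided one first passes, via the homotopy equivalence of Lemma \ref{Lemma 1.8}'s type (Palais), to the dense subspace where things are regular — more precisely, I would restrict attention to the parameter $\lambda' > \lambda$ precisely so that the normalized restriction $\widehat u_i$ lands in $\widehat{L}_{\Omega_i}^{\lambda'}$ whenever $\norm[p^+]{u_i}$ is bounded below, and handle the complementary region (where $u_i$ is small) by noting the corresponding join coordinate is then near the cone point, so $h_i$ need only be defined where it already is. Continuity at the cone locus is the point to verify carefully: as $\norm[p^+]{u_i} \to 0$, the scaled vector $\norm[p^+]{u_i}\, h_i(\widehat u_i) \to 0$ in $\R^{m_i}$ regardless of the (bounded) value of $h_i$, so the composite map into the join is continuous; it is odd since each $h_i$ is and the scalar weights $\norm[p^+]{u_i}$ are even. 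Hence $\gamma(\widehat{L}_\Omega^\lambda) \le m_1 + m_2 = \gamma(\widehat{L}_{\Omega_1}^{\lambda'}) + \gamma(\widehat{L}_{\Omega_2}^{\lambda'})$.

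The main obstacle is exactly this last continuity-at-the-cone-point issue in the genus inequality: one cannot simply normalize $\restr{u}{\Omega_i}$ because its $L^{p^+}$-norm can degenerate to zero, and the fix is to package $h_1, h_2$ into the join rather than trying to produce a single sphere-valued map directly. The strict inequality $\lambda < \lambda'$ is what gives the slack needed so that the normalized restrictions land in the target sublevel sets on the region where normalization is legitimate. The cogenus half is comparatively clean because there the join construction goes in the "easy" direction — one builds a map \emph{into} $\widehat{K}_\Omega^\lambda$ by superposition of disjointly supported functions, and the exponents $p^- \le p^+$ conspire to make the normalization and the gradient bound work out with equality to spare. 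One small bookkeeping point in both halves: the identification $S^{a-1} \ast S^{b-1} \cong S^{a+b-1}$ and the observation that an odd map out of (resp. into) the join yields the stated additive bound on cogenus (resp. genus) — this is where the additivity, rather than subadditivity, of the two invariants under disjoint union enters, and it should be stated as a lemma or cited.
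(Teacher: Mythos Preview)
Your argument for the cogenus inequality is correct and is essentially the paper's join construction, with a slightly cleaner choice of weights (your $s^{2/p^-},\, t^{2/p^-}$ in place of the paper's $(1-t),\, t$ followed by an $L^{p^-}$-renormalization); both reduce to the same power--mean comparison between the $\ell^{p^+}$ and $\ell^{p^-}$ norms of the weight pair.

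The genus half has a genuine gap. Your map
\[
u \longmapsto \bigl(\norm[p^+]{u_1}\, h_1(\widehat{u}_1),\ \norm[p^+]{u_2}\, h_2(\widehat{u}_2)\bigr)
\]
is not defined on all of $\widehat{L}_\Omega^\lambda$: each $h_i$ exists only on $\widehat{L}_{\Omega_i}^{\lambda'}$, and when $\rho_i := \norm[p^+]{u_i}$ is small but \emph{positive} the normalized restriction satisfies $\widehat{L}_{\Omega_i}(\widehat{u}_i) = \norm[p^-]{\nabla u_i}/\rho_i$, which can be arbitrarily large, so $\widehat{u}_i \notin \widehat{L}_{\Omega_i}^{\lambda'}$ and $h_i(\widehat{u}_i)$ is meaningless. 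Since the weight $\rho_i$ is strictly positive there, you cannot declare that slot to be the cone point; your continuity remark (``$\rho_i\, h_i(\widehat u_i)\to 0$ as $\rho_i\to 0$'') presupposes that $h_i(\widehat u_i)$ is at least defined and bounded along the approach, which it is not.

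The paper repairs this by replacing the raw weight $\rho_i$ with a product of cutoffs $\eta(\rho_i)\,\zeta\bigl(\widehat{L}_{\Omega_i}(\widetilde{u}_i)\bigr)$, where $\eta$ vanishes near $0$ and $\zeta$ vanishes on $[\lambda',\infty)$, so that $g_i(\widetilde{u}_i)$ is only invoked when $\widetilde{u}_i\in\widehat{L}_{\Omega_i}^{\lambda'}$. The nontrivial step---the one your sketch skips---is then to show that the resulting vector in $\R^{m_1+m_2}$ never vanishes, so it can be normalized onto $S^{m_1+m_2-1}$. This is where the strict gap $\lambda<\lambda'$ is actually spent: one introduces an intermediate level $\lambda''\in(\lambda,\lambda')$ with $(\lambda/\lambda'')^{p^+}\ge 1/2$, tunes $\eta\equiv 1$ on $\bigl[[1-(\lambda/\lambda'')^{p^+}]^{1/p^+},\infty\bigr)$ and $\zeta\equiv 1$ on $[0,\lambda'']$, and argues by cases (using $\rho_1^{p^+}+\rho_2^{p^+}=1$ together with $\norm[p^-]{\nabla u_1}^{p^-}+\norm[p^-]{\nabla u_2}^{p^-}<\lambda^{p^-}$) that for at least one index both cutoffs equal $1$. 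Your observation that ``$\lambda<\lambda'$ gives the slack needed'' points at the right place, but the slack must be built explicitly into the cutoffs and the non-vanishing verified; the join formalism by itself does not supply this.
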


\begin{proof}
Since $\widehat{K}_\Omega^\lambda$ contains $\widehat{K}_{\Omega_1}^\lambda$ and $\widehat{K}_{\Omega_2}^\lambda$, if $\widetilde{\gamma}(\widehat{K}_{\Omega_1}^\lambda)$ or $\widetilde{\gamma}(\widehat{K}_{\Omega_2}^\lambda)$ is infinite, then so is $\widetilde{\gamma}(\widehat{K}_\Omega^\lambda)$ and hence the first inequality holds. So let $\widetilde{m}_i := \widetilde{\gamma}(\widehat{K}_{\Omega_i}^\lambda) < \infty$ and let $\widetilde{g}_i : S^{\widetilde{m}_i-1} \to \widehat{K}_{\Omega_i}^\lambda$ be an odd continuous map for $i = 1, 2$. Write $y \in S^{\widetilde{m}_1+\widetilde{m}_2-1}$ as $y = (y_1,y_2) \in \R^{\widetilde{m}_1} \oplus \R^{\widetilde{m}_2}$, set $\abs{y_2} = t$, and let
\[
\widetilde{g}(y) = \begin{cases}
\widetilde{g}_1(y_1), & t = 0\\[7.5pt]
\dfrac{(1 - t)\, \widetilde{g}_1(y_1/\sqrt{1 - t^2}) + t\, \widetilde{g}_2(y_2/t)}{\norm[p^-]{(1 - t)\, \widetilde{g}_1(y_1/\sqrt{1 - t^2}) + t\, \widetilde{g}_2(y_2/t)}}, & 0 < t < 1\\[17pt]
\widetilde{g}_2(y_2), & t = 1.
\end{cases}
\]
Clearly, $\widetilde{g}(y) \in \widehat{K}_\Omega^\lambda$ for $t = 0, 1$. For $0 < t < 1$,
\[
\widehat{K}_\Omega(\widetilde{g}(y)) < \lambda\, \frac{\big[(1 - t)^{p^+} + t^{p^+}\big]^{1/p^+}}{\big[(1 - t)^{p^-} + t^{p^-}\big]^{1/p^-}} \le \lambda
\]
since $p \mapsto [(1 - t)^p + t^p]^{1/p}$
on $(1,\infty)$ is nonincreasing. So $\widetilde{g} : S^{\widetilde{m}_1+\widetilde{m}_2-1} \to \widehat{K}_\Omega^\lambda$ is an odd continuous map and hence $\widetilde{\gamma}(\widehat{K}_\Omega^\lambda) \ge \widetilde{m}_1 + \widetilde{m}_2$.

Since the second inequality holds if $\gamma(\widehat{L}_{\Omega_1}^{\lambda'})$ or $\gamma(\widehat{L}_{\Omega_2}^{\lambda'})$ is infinite, let $m_i := \gamma(\widehat{L}_{\Omega_i}^{\lambda'}) < \infty$ and let $g_i : \widehat{L}_{\Omega_i}^{\lambda'} \to S^{m_i-1}$ be an odd continuous map for $i = 1, 2$. For $u \in \widehat{L}_\Omega^\lambda$, let $u_i = \restr{u}{\Omega_i}$, $\rho_i = \norm[p^+]{u_i}$, and $\widetilde{u}_i = u_i/\rho_i$ if $\rho_i \ne 0$. Fix $\lambda'' \in (\lambda,\lambda')$ such that $(\lambda/\lambda'')^{p^+} \ge 1/2$, take smooth cutoff functions $\eta,\, \zeta : [0,\infty) \to [0,1]$ such that $\eta = 0$ near zero, $\eta = 1$ on $[[1 - (\lambda/\lambda'')^{p^+}]^{1/p^+},\infty)$, $\zeta = 1$ on $[0,\lambda'']$ and $\zeta = 0$ on $[\lambda',\infty)$, and let
\begin{equation} \label{3.3}
g(u) = \dfrac{\big(\eta(\rho_1)\, \zeta(\widehat{L}_{\Omega_1}(\widetilde{u}_1))\, g_1(\widetilde{u}_1),\eta(\rho_2)\, \zeta(\widehat{L}_{\Omega_2}(\widetilde{u}_2))\, g_2(\widetilde{u}_2)\big)}{\sqrt{\eta(\rho_1)^2\, \zeta(\widehat{L}_{\Omega_1}(\widetilde{u}_1))^2 + \eta(\rho_2)^2\, \zeta(\widehat{L}_{\Omega_2}(\widetilde{u}_2))^2}},
\end{equation}
with the understanding that $\eta(\rho_i)\, \zeta(\widehat{L}_{\Omega_i}(\widetilde{u}_i))\, g_i(\widetilde{u}_i) = 0$ if $\rho_i = 0$. We claim that the denominator is greater than or equal to $1$. The claim is clearly true if $u_1 = 0$ or $u_2 = 0$, so suppose $u_1 \ne 0$ and $u_2 \ne 0$. Since $\rho_1^{p^+} + \rho_2^{p^+} = \norm[p^+]{u}^{p^+} = 1$, either $\rho_1 \ge 1/2^{1/p^+}$ or $\rho_2 \ge 1/2^{1/p^+}$, and since $1/2^{1/p^+} \ge [1 - (\lambda/\lambda'')^{p^+}]^{1/p^+}$, then either $\eta(\rho_1) = 1$ or $\eta(\rho_2) = 1$. Moreover, if $\widehat{L}_{\Omega_i}(\widetilde{u}_i) \ge \lambda$ for $i = 1, 2$, then
\[
1 = \rho_1^{p^+} + \rho_2^{p^+} \le \rho_1^{p^-} + \rho_2^{p^-} \le \frac{\norm[p^-]{\nabla u_1}^{p^-} + \norm[p^-]{\nabla u_2}^{p^-}}{\lambda^{p^-}} = \frac{\norm[p^-]{\nabla u}^{p^-}}{\lambda^{p^-}} < 1,
\]
a contradiction, so either $\zeta(\widehat{L}_{\Omega_1}(\widetilde{u}_1)) = 1$ or $\zeta(\widehat{L}_{\Omega_2}(\widetilde{u}_2)) = 1$. Consequently, we are done if $\eta(\rho_1) = 1$ and $\eta(\rho_2) = 1$, so assume that one of them, say $\eta(\rho_2)$, is less than $1$. Then $\eta(\rho_1) = 1$. Moreover, $\rho_2 < [1 - (\lambda/\lambda'')^{p^+}]^{1/p^+}$ and hence
\[
\widehat{L}_{\Omega_1}(\widetilde{u}_1) = \frac{\norm[p^-]{\nabla u_1}}{\rho_1} \le \frac{\norm[p^-]{\nabla u}}{(1 - \rho_2^{p^+})^{1/p^+}} < \lambda'',
\]
so $\zeta(\widehat{L}_{\Omega_1}(\widetilde{u}_1)) = 1$. Thus, the denominator in \eqref{3.3} is greater than or equal to $\eta(\rho_1)\, \zeta(\widehat{L}_{\Omega_1}(\widetilde{u}_1)) = 1$. So $g : \widehat{L}_\Omega^\lambda \to S^{m_1+m_2-1}$ is an odd continuous map and hence $\gamma(\widehat{L}_\Omega^\lambda) \le m_1 + m_2$.
\end{proof}

We are now ready to prove Theorem \ref{mainth}.

\begin{proof}[Proof of Theorem \ref{mainth}]
Continuously extend $p$ to the whole space, with the same bounds $p^-$ and $p^+$, using the Tietze extension theorem. Let $Q$ be the unit cube in $\R^n$, fix $\lambda_0 > \max \bgset{\inf \widehat{K}_Q, \inf \widehat{L}_Q}$, and set
\[
r = \widetilde{\gamma}(\widehat{K}^{\lambda_0}_Q), \qquad s = \gamma(\widehat{L}^{\lambda_0}_Q).
\]
Then for $\lambda' > \lambda > \lambda_0$ and any two cubes $Q_{a_\lambda}$ and $Q_{b_{\lambda'}}$ of sides $a_\lambda = (\lambda_0/\lambda)^{1/(1 + \sigma)}$ and $b_{\lambda'} = (\lambda_0/\lambda')^{1/(1 - \sigma)}$, respectively, Lemma \ref{Lemma 1.6} gives the odd homeomorphisms
\[
\widehat{K}^{\lambda_0}_Q \to \widehat{K}^\lambda_{Q_{a_\lambda}}, \hquad u \mapsto \frac{v}{\norm[p^-]{v}}, \qquad \widehat{L}^{\lambda_0}_Q \to \widehat{L}^{\lambda'}_{Q_{b_{\lambda'}}}, \hquad u \mapsto \frac{v}{\norm[p^+]{v}},
\]
so
\[
\widetilde{\gamma}(\widehat{K}^\lambda_{Q_{a_\lambda}}) = r, \qquad \gamma(\widehat{L}^{\lambda'}_{Q_{b_{\lambda'}}}) = s.
\]
Now it follows from Lemma \ref{Lemma 1.5} that if $Q_a$ is a cube of side $a > 0$, then
\[
r \intpart{\frac{a}{a_\lambda}}^n \le \widetilde{\gamma}(\widehat{K}^\lambda_{Q_a}), \qquad \gamma(\widehat{L}^\lambda_{Q_a}) \le s \left(\intpart{\frac{a}{b_{\lambda'}}} + 1\right)^n,
\]
where $\intpart{\cdot}$ denotes the integer part. Thus, there are constants $C_1, C_2 > 0$, independent of $a$, $\lambda$, and $\lambda'$, such that
\begin{equation} \label{1.9}
C_1\, a^n \lambda^{n/(1 + \sigma)} \le \widetilde{\gamma}(\widehat{K}^\lambda_{Q_a}), \qquad \gamma(\widehat{L}^\lambda_{Q_a}) \le C_2\, a^n (\lambda')^{n/(1 - \sigma)}, \qquad \lambda < \lambda' \text{ large}.
\end{equation}

Let $\varepsilon > 0$ and let $\Omega_\varepsilon, \Omega^\varepsilon$ be unions of cubes with pairwise disjoint interiors such that $\Omega_\varepsilon \subset \Omega \subset \Omega^\varepsilon$ and $|\Omega^\varepsilon \setminus \Omega_\varepsilon| < \varepsilon$. Then
\[
C_1\, |\Omega_\varepsilon|\, \lambda^{n/(1 + \sigma)} \le \widetilde{\gamma}(\widehat{K}^\lambda_{\Omega_\varepsilon}) \le \widetilde{\gamma}(\widehat{K}^\lambda), \qquad \gamma(\widehat{L}^\lambda) \le \gamma(\widehat{L}^\lambda_{\Omega^\varepsilon}) \le C_2\, |\Omega^\varepsilon|\, (\lambda')^{n/(1 - \sigma)}
\]
by \eqref{1.9} and Lemma \ref{Lemma 1.5}. Letting $\varepsilon \searrow 0$, $\lambda' \searrow \lambda$, and combining with \eqref{1.46}, \eqref{1.8}, and Lemma \ref{Lemma 1.8} yields the conclusion.
\end{proof}

\def\cdprime{$''$}

\bigskip

\end{document}